\newcommand{\mcm}[3]{\newcommand{#1}[#2]{{\ensuremath{#3}}}} 
\mcm{\tuple}{1}{\langle #1 \rangle}
\mcm{\name}{1}{\ulcorner #1 \urcorner}
\mcm{\Nbb}{0}{\mathbb{N}}
\mcm{\Zbb}{0}{\mathbb{Z}}
\mcm{\Rbb}{0}{\mathbb{R}}
\mcm{\Cbb}{0}{\mathbb{C}}
\mcm{\Qbb}{0}{\mathbb{Q}}
\mcm{\Acal}{0}{\cal A}
\mcm{\Bcal}{0}{\cal B}
\mcm{\Ccal}{0}{\cal C}
\mcm{\Dcal}{0}{\cal D}
\mcm{\Ecal}{0}{\cal E}
\mcm{\Fcal}{0}{\cal F}
\mcm{\Gcal}{0}{\cal G}
\mcm{\Hcal}{0}{\cal H}
\mcm{\Ical}{0}{\cal I}
\mcm{\Jcal}{0}{\cal J}
\mcm{\Kcal}{0}{\cal K}
\mcm{\Lcal}{0}{\cal L}
\mcm{\Mcal}{0}{\cal M}
\mcm{\Ncal}{0}{\cal N}
\mcm{\Ocal}{0}{{\cal O}}
\mcm{\Pcal}{0}{{\cal P}}
\mcm{\Qcal}{0}{{\cal Q}}
\mcm{\Rcal}{0}{{\cal R}}
\mcm{\Scal}{0}{{\cal S}}
\mcm{\Tcal}{0}{{\cal T}}
\mcm{\Ucal}{0}{{\cal U}}
\mcm{\Vcal}{0}{{\cal V}}
\mcm{\Wcal}{0}{{\cal W}}
\mcm{\Xcal}{0}{{\cal X}}
\mcm{\Ycal}{0}{{\cal Y}}
\mcm{\Zcal}{0}{{\cal Z}}
\mcm{\Mfrak}{0}{\mathfrak M}
\mcm{\restric}{0}{\upharpoonright}
\mcm{\upset}{0}{\uparrow}
\mcm{\onto}{0}{\twoheadrightarrow}
\mcm{\smallNbb}{0}{{\small \mathbb{N}}}
\DeclareMathOperator{\preop}{op}
\mcm{\op}{0}{^{\preop}}
\newcommand{\theoremize}[2]{\newaliascnt{#1}{thm} \newtheorem{#1}[#1]{#2} \aliascntresetthe{#1}}
\theoremstyle{plain}
\theoremstyle{definition}
\theoremstyle{plain}
\title{On Andreae's Ubiquity Conjecture}
\author{Johannes Carmesin\footnote{Funded by EPSRC, grant number EP/T016221/1}
\medskip
\\
  {University of Birmingham}
}
\newcommand{\sm}{\setminus}
\mcm{\Fbb}{0}{\mathbb{F}}
\begin{document}

\maketitle

\abstract{A graph $H$ is \emph{ubiquitous} if for every graph $G$ that for every
natural number $n$ contains $n$ vertex-disjoint $H$-minors contains
infinitely many vertex-disjoint $H$-minors.
Andreae conjectured that every locally finite graph is ubiquitous. We
give a disconnected counterexample to this conjecture.
It remains open
whether every \emph{connected} locally finite graph is ubiquitous.}

\vspace{.3cm}

\noindent {\bf Introduction.}
Thomas conjectured that the class of countable graphs is well-quasi
ordered by the minor relation \cite{T:wqo_planar}. Related to this, in 2002 Andreae conjectured that every locally finite\footnote{Andreae constructed an uncountable graph that is not ubiquitous.}
graph is ubiquitous; here a graph $H$
is  \emph{ubiquitous}  (with respect to the minor relation) if for every graph $G$ that for every natural number $n$ contains $n$ vertex-disjoint $H$-minors contains
infinitely many vertex-disjoint $H$-minors \cite{andreae02}.
Finite graphs are clearly ubiquitous, and by Halin's theorem \cite{halin65}
the ray is ubiquitous. Andreae proved quite a few cases of his
conjecture, particularly that connected graphs of finite tree-width such that all their maximal 2-connected subgraphs are finite are ubiquitous \cite{andreae13}.
Recently Bowler, Elbracht, Erde, Gollin, Heuer, Pitz and Teegen put forward a series of papers \cite{{ubi1},{ubi2},{ubi3}}
in which they prove that a large class of locally finite graphs are
ubiquitous, including the 2-dimensional grid!
Here we note that the condition of \lq connectedness\rq\ needs to be
added to Andreae's conjecture. In other words, we construct a
disconnected locally finite graph that is not ubiquitous.

\begin{figure}[ht]
\centering
\includegraphics[height=8\baselineskip]{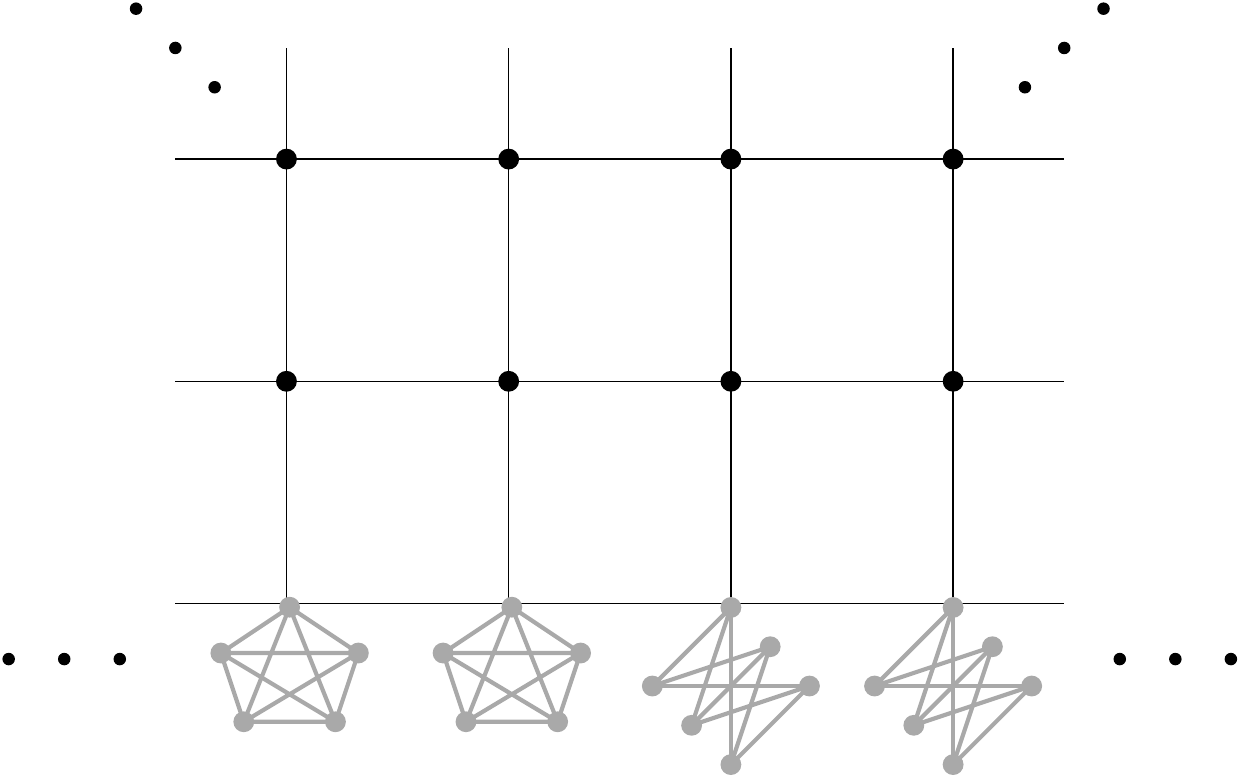}
\caption{The graph $G$ indicated in this figure is obtained from the
half-grid $\Zbb\times \Nbb$ by attaching
a $K_5$ at all vertices $(a,0)$ with $a<0$ and a $K_{3,3}$ at all vertices $(b,0)$ with $b\geq 0$.}
\label{fig:G}
\end{figure}

\noindent{\bf The construction.}
The graph $G$ is indicated in \autoref{fig:G} and defined in its caption.
Let $I$ be the 1-sum\footnote{The \emph{1-sum} of $K_5$ and $K_{3,3}$ is obtained from their disjoint by identifying two vertices, one from each.} of $K_5$ and $K_{3,3}$.
Let $H$ be the disjoint union of $I$ with a ray.

\theoremstyle{plain}
\newtheorem{slemma}{Lemma}

\begin{slemma}\label{nice}
In the graph $G$, each $I$-minor is composed of a $K_5$-subgraph attached at a vertex $(a,0)$, a path in the $\Zbb\times \Nbb$ grid from  $(a,0)$ to a vertex $(b,0)$ with $b\geq 0$ and a $K_{3,3}$ attached at $(b,0)$.
\end{slemma}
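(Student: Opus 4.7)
The plan is to exploit the block structure of $G$ and the $2$-connectedness of $K_5$ and $K_{3,3}$. The blocks of $G$ are the half-grid $L := \Zbb \times \Nbb$ (itself $2$-connected and planar) together with each attached $K_5$ and each attached $K_{3,3}$, any of which meets $L$ only in its attachment vertex. I rely on the standard principle that any $2$-connected minor of $G$ has a model contained in a single block: given a cut vertex $v$ of $G$, at most one branch set contains $v$, and the $2$-connectedness of the minor (applied via paths in the minor avoiding the branch set at $v$) forces the remaining branch sets all to one side of $v$, so branch-set pieces on the other side can be truncated while retaining connectedness, since any internal path to the far side inside a branch set would have to revisit $v$.

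Applied to $K_5$- and $K_{3,3}$-minors of $G$ this yields a strong conclusion. Because $L$ is planar it admits neither minor, an attached $K_{3,3}$ has only nine edges and so contains no $K_5$-minor, and an attached $K_5$ has only five vertices and so contains no $K_{3,3}$-minor. Hence every $K_5$-minor of $G$ lies inside some attached $K_5$ block, and since its five branch sets are distributed over only five vertices each branch set must be a singleton; dually every $K_{3,3}$-minor is an attached $K_{3,3}$ block taken vertex-by-vertex as singletons.

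Now take any $I$-model in $G$ and split its branch sets into the $K_5$-part and the $K_{3,3}$-part, which share only the branch set $V_*$ corresponding to the $1$-sum vertex of $I$. The previous paragraph identifies the $K_5$-part with the vertices of some attached $K_5$ at $(a,0)$ with $a<0$ and the $K_{3,3}$-part with the vertices of some attached $K_{3,3}$ at $(b,0)$ with $b\geq 0$. In particular $V_*$ contains a vertex of each attached block, and since each of these blocks is glued to the rest of $G$ through a unique cut vertex, the connectedness of $V_*$ forces $(a,0)$ and $(b,0)$ both into $V_*$. A simple path in $V_*$ from $(a,0)$ to $(b,0)$ cannot pass through any other attached $K_5$ or $K_{3,3}$, since each such block has only one cut vertex and a simple path cannot use a vertex twice; the path therefore lies entirely in the grid $L$, producing the desired decomposition. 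The main subtlety is the block-confinement step; once it is in hand, the rest is bookkeeping about cut vertices.
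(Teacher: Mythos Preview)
Your proof is correct and follows essentially the same approach as the paper: both arguments identify the blocks (maximally $2$-connected subgraphs) of $G$ as the planar grid together with the attached $K_5$'s and $K_{3,3}$'s, and then use planarity/Kuratowski to confine the $K_5$- and $K_{3,3}$-parts of any $I$-model to attached copies. Your write-up is in fact more careful than the paper's terse version: you spell out the block-confinement principle for $2$-connected minors, explain why the non-grid blocks cannot host the ``wrong'' Kuratowski graph, and argue explicitly why the connecting path stays inside the grid---details the paper leaves implicit.
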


\begin{proof}
The family of maximally 2-connected subgraphs of $G$ consists of the $\Zbb\times \Nbb$ grid together with the attached $K_5$-subgraphs and the attached $K_{3,3}$-subgraphs. Thus by Kuratowski's theorem, the attached $K_5$-subgraphs are the only $K_5$-minors of $G$. Hence every $I$-minor consists of an attach $K_5$-subgraph, an attached $K_{3,3}$-subgraph and a path between them.
\end{proof}

\theoremstyle{plain}
\newtheorem{propo}{Proposition}
\addtocounter{propo}{1}

\begin{propo}
For every natural number $n$, the graph $G$ has $n$ vertex-disjoint
$H$-minors, yet it does not have infinitely many vertex-disjoint
$H$-minors.
\end{propo}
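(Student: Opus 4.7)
\emph{Part 1 (construction).} For $i=1,\ldots,n$, take the $i$-th $H$-minor to consist of the $K_5$ attached at $(-i,0)$, the $K_{3,3}$ attached at $(n+i,0)$, the grid path $(-i,0)\to(-i,i)\to(n+i,i)\to(n+i,0)$ (up column $-i$, across row $y=i$, down column $n+i$), and the ray $\{(2n+i,y):y\geq 0\}$. Pairwise disjointness is by inspection: the $n$ horizontal pieces use distinct rows $y=i$, and the column indices $-i$, $n+i$, $2n+i$ (for $i=1,\ldots,n$) are pairwise distinct.

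\emph{Part 2 (non-existence); set-up and nesting.} Suppose $M_1,M_2,\ldots$ are infinitely many pairwise vertex-disjoint $H$-minors. By \autoref{nice}, each $M_j$ contains a $K_5$ at $(a_j,0)$, a grid path $P_j$ from $(a_j,0)$ to $(b_j,0)$ with $a_j<0\leq b_j$, a $K_{3,3}$ at $(b_j,0)$, and a ray $R_j$, all pairwise disjoint across $j$. The union $P_j\cup([a_j,b_j]\times\{0\})$ is a Jordan curve in the closed upper half-plane bounding a finite region $A_j$. Since every interval $[a_j,b_j]$ contains $0$, the standard planar argument (two disjoint arcs in the upper half-plane with crossing endpoint-intervals must intersect) forces the intervals to be totally ordered by inclusion; combined with distinctness of the $a_j$'s and $b_j$'s, this yields a subsequence along which $A_1\subsetneq A_2\subsetneq\cdots$ with $a_j\to-\infty$ and $b_j\to+\infty$.

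\emph{Covering claim and ray elimination.} I claim $\bigcup_j A_j$ contains every grid vertex. Fix $v=(x_0,y_0)$. For $j$ sufficiently large, $a_j<x_0<b_j$, and so $P_j$ must use at least one vertex of the column $\{x_0\}\times\Nbb$; let $y_j^*$ be the smallest $y\geq 1$ with $(x_0,y)\in P_j$. Vertex-disjointness of the $P_j$'s forces the $y_j^*$'s to be pairwise distinct positive integers, so some $j$ has $y_j^*>y_0$, and for that $j$ the vertical segment from $(x_0,0)$ to $(x_0,y_j^*)$ meets $\partial A_j$ only at its two endpoints, placing $v$ in $A_j$. Given the covering, the ray $R_j$ cannot exist: if some branch set $U$ of $R_j$ met $A_i^\circ$ or the open segment $(a_i,b_i)\times\{0\}$, connectedness together with disjointness from $I_i$ would confine $U$---and by induction all subsequent branch sets---to the finite set consisting of $A_i$ together with the attached $K_5$'s and $K_{3,3}$'s along the open boundary segment, since any exit either lies on $P_i$ or enters a dead-end attachment. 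This contradicts $R_j$ being infinite, so by the covering claim $R_j$ has no grid vertex at all; since each attachment is finite and only connects to $G$ through its attachment vertex, $R_j$ cannot be a ray---contradiction.

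\emph{Main obstacle.} The covering claim $\bigcup_j A_j=V(\Zbb\times\Nbb)$ is the step with real content: a priori the $A_j$'s could be non-convex and miss individual grid vertices. The column-by-column argument with the pairwise distinct lowest crossings $y_j^*$ is the cleanest way I see to convert planarity and vertex-disjointness into the required height-escape statement.
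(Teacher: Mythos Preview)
Your argument is correct and takes a more hands-on route than the paper's. For the existence of $n$ disjoint $H$-minors, the paper argues non-constructively (delete $n$ disjoint $I$-minors; the thick end survives, so Halin's theorem supplies a ray in what remains), whereas you write down explicit minors and avoid Halin entirely. For the non-existence of infinitely many, the paper works abstractly from the $1$-endedness of $G$: for each path $P_b$ it lets $C_b$ be the unique infinite component of $G\setminus P_b$, observes that the $C_b$ form a strictly decreasing chain with empty intersection, and notes that any ray avoiding all the $P_b$ would have to lie in every $C_b$. Your bounded regions $A_j$ are essentially the complements of the paper's $C_j$, so the two proofs have the same content; the paper's version is much shorter, but your column-crossing argument actually justifies the step $\bigcap_b C_b=\emptyset$ that the paper dispatches in one sentence. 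One small point: $P_j\cup([a_j,b_j]\times\{0\})$ is not literally a Jordan curve when $P_j$ revisits row $0$ at an interior vertex; the clean fix is to close $P_j$ with an arc through the open lower half-plane instead, after which your covering and trapping arguments go through verbatim. (The nesting $A_1\subsetneq A_2\subsetneq\cdots$ is also stronger than you need---only $a_j\to-\infty$ and $b_j\to+\infty$ are used in the covering claim.)
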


\begin{proof}
The graph $G$ clearly has infinitely many vertex-disjoint
$I$-minors. Deleting finitely many of these finite minors, leaves one component in
which the thick end of $G$ lives. This component contains infinitely
many vertex-disjoint rays by Halin's theorem. Thus for every natural
number $n$, the graph $G$ has $n$ vertex-disjoint $H$-minors.

Now let $\Ccal$ be a family of infinitely many vertex-disjoint
$I$-minors of $G$. By Lemma \ref{nice}, each of them contains a path $P_b$ from a vertex $(a,0)$ with $a<0$ to a vertex $(b,0)$ with $b\geq 0$.
So these paths form an infinite family $(P_b|b\in B)$ of vertex-disjoint paths in $G$.
As $G$ is 1-ended, the graph $G\sm P_b$ has exactly one infinite component $C_b$; this component contains all paths $P_c$ with $c\geq b$.
Thus the family $(C_b|b\in B)$ is decreasing and it is strictly decreasing as the boundaries are the paths $P_b$, which are vertex-disjoint.
Hence the intersection $\bigcap_{b\in B} C_b$ is empty.
As every ray of $G\sm \bigcup_{b\in B} P_b$ must be contained in each component $C_b$, the graph $G\sm \bigcup_{b\in B} P_b$ is rayless.
Hence $G$
does not have infinitely many vertex-disjoint $H$-minors.
\end{proof}

\section{Acknowledgement}

I am grateful to Alex Scott, who found an error in an earlier version of this manuscript.

\bibliographystyle{plain}
\bibliography{literatur}

\end{document}